\documentclass{ifacconf}

\usepackage{graphicx}      
\usepackage{natbib}        

\usepackage{amsmath, amssymb} 
\usepackage{dsfont}

\usepackage{subcaption}

\usepackage{xcolor}

\definecolor{darkgreen}{RGB}{0,140,0} 

\newcommand{\scal}{\mathbf{\cdot}}                 %
\newcommand{\defeq}{\mathrel{\mathop:}=}

\begin{document}
\begin{frontmatter}

\title{Generalized Lotka-Volterra Model with Species Turnover in a Variable-Basis State Space\thanksref{footnoteinfo}} 

\thanks[footnoteinfo]{This project is supported by the French State as part of Investments for the Future Programme, now integrated into France 2030, and operated by ADEME.}

\author[First]{Arthur Doliveira} 
\author[First]{Christophe Roman} 
\author[First,Second]{Guillaume Graton}
\author[First]{Mustapha Ouladsine}

\address[First]{Aix Marseille Université, Université de Toulon, CNRS, LIS (UMR
7020), Avenue Escadrille Normandie-Niemen, F-13397 Marseille Cedex 20, France (e-mail: arthur.doliveira@lis-lab.fr).}
\address[Second]{Centrale Méditerranée, Technopôle de Château-Gombert, 38 rue
Frédéric Joliot-Curie, F-13451 Marseille Cedex 13, France}

\begin{abstract}                
The state space is a fundamental concept for describing the trajectory of a dynamic system. Depending on its form, it can highlight certain changes over time while ignoring others. This is particularly the case for the spaces associated with theoretical ecology models, notably the generalized Lotka-Volterra (gLV) model, which allows the modeling of interacting populations. The fixed-dimension state space classically used in gLV models does not account for the effective renewal of species through addition, removal, or mutation. To address this limitation, we propose a new variable-base state space, introduced in a previous study. This framework leads to a reformulation of the gLV model within the context of hybrid dynamical systems. To illustrate the approach, we apply the proposed model to the gut microbiota, particularly in the context of bacteriotherapy following antibiotic treatment.
\end{abstract}

\begin{keyword}
Variable-Basis State Space; Hybrid Dynamical Systems; Generalized Lotka-Volterra Model. 
\end{keyword}

\end{frontmatter}

\section{\label{Intro}Introduction}
Theoretical ecology is a fascinating framework that groups concepts and propositions used to study the dynamics of organisms in relation to their abundance and distribution across time and space, as well as their causes (\cite{scheiner2008general,rockwood2015introduction}). These organisms are referred to as ecological populations, and their study focuses on fundamental properties: growth, survival, and reproduction (\cite{rockwood2015introduction}). To analyze these properties, ecologists adopting a mathematical perspective have developed predictive models, including the well-known Lotka–Volterra model. It allows for modeling two foundational principles of modern theoretical ecology: the predator–prey interaction cycle and competition (\cite{wangersky1978lotka}). The former describes an interaction in which one species feeds on another, whereas the latter corresponds to a situation in which two species compete for a shared resource.

The Lotka–Volterra model originates from single-species growth models (\cite{wangersky1978lotka, berryman1992orgins}). In its simplest form, it involves interaction between two species. It was later extended to include additional species, giving rise to the generalized Lotka–Volterra (gLV) model  (\cite{wangersky1978lotka, taylor1988consistent}). This generalized version can accommodate different types of interactions, including those already mentioned (predator–prey and competition) as well as facilitation, in which one species promotes the growth of another (\cite{fort2018predicting}). Based on the principle that populations affect each other’s abundance regardless of the interaction mechanism, \cite{fort2018predicting} used the gLV model to assess its ability to predict species abundance in interacting groups across various taxa, such as plants, algae, and crustaceans. \cite{vaidyanathan2015adaptive} applied adaptive control using this model to a chaotic system comprising two predator populations and one prey population, in order to drive it toward a target state. The model is also applicable to systems involving non-living entities: in technology, to study interactions between a technological product and its components (\cite{zhang2018technology}); and in economics, to model, for example, the temporal evolution of wealth distribution in a society (\cite{malcai2002theoretical}).

Although it is a fascinating model, it exhibits—like many other models in theoretical ecology—a significant limitation that we aim to address in this paper. This limitation is highlighted by \cite{hirsch1984dynamical}. It can be reformulated as a research question in the following way:
\begin{center}
\emph{The state of an ecological system is often defined as the list of populations of a fixed set of species. In reality, some species may disappear while others may appear over time through various processes, such as immigration or mutation. How, then, can one incorporate the phenomena of species emergence and disappearance into the modeling process?}
\end{center}

In the context of the gLV model, two main approaches can be distinguished. The first, classical one represents the state of the system by a fixed-size vector $\mathbf{x} \in \mathbb{R}^n$ ($n \geq 2$), where each coordinate $x_p$ corresponds to the abundance of species $p$ (\cite{jones2020navigation}). However, this approach presents two major limitations. First, the fixed dimension of the vector requires the set of species to be defined \textit{a priori}, which prevents the modeling of certain evolutionary phenomena, such as mutation and immigration. Mutation corresponds to the replacement of a species $p$ by a new species $q$ not initially included, while immigration (or introduction) refers to the arrival of a new species—one not present among the predefined species—into the existing interaction system. Second, the zero value $x_p = 0$, often interpreted as the absence of a species, poses a challenge when modeling microbial communities such as the gut microbiota, where zeros are frequent. According to  \cite{li2021modeling}, these zeros may reflect either the true extinction of a species or the detection limits of sequencing techniques. Failing to distinguish between these possibilities can bias comparisons between models and experimental data. These two limitations are intrinsically linked to the nature of the state space (\cite{hirsch1984dynamical}).

The second approach is proposed by  \cite{taylor1988construction}. In it, a stable system is constructed through the addition and removal of species, which modifies the dimension of the state vector and results in a change of coordinates. These modifications follow algorithmic rules—for example, removing a species when its abundance falls below a threshold—but do not rely on any formal algebraic operations, which is consistent with prior observations of \cite{hirsch1984dynamical} that such operations do not (yet) exist. The resulting changes can be interpreted as a transition from one model to another. Although relevant, this approach remains difficult to reuse, as it does not guarantee the qualitative properties of the model, notably the existence of a solution. Indeed, Taylor defines neither the state space, the operations for changing its dimension, nor the very notion of a solution.

To overcome these limitations, we propose to use a variable-basis state space, introduced in a previous paper (\cite{doliveira:hal-05047454}). Equipped with algebraic operations, we demonstrate its properties. By interpreting the addition and removal of species as discrete events, we propose reformulating the gLV model within the framework of hybrid dynamical systems theory (\cite{goebel2012hybrid}). We then apply this new model to describe the dynamics of the gut microbiota, with particular attention to bacteriotherapy following antibiotic treatment.

The gut microbiota is a complex and dynamic community of diverse and abundant microorganisms inhabiting the gastrointestinal tract (\cite{thursby2017introduction, panda2014structure}). These microorganisms interact with each other, contributing to the stability and growth of the community (\cite{lin2022linear, luo2024progress}). Perturbations in its composition—for example due to infections or antibiotic administration—or changes in microbial interactions can affect health, contributing to diseases such as obesity, inflammatory bowel disease, or HIV (\cite{lin2022linear}). Many studies modeling the dynamics of the microbiota under these perturbations use the gLV model (\cite{jones2020navigation, li2021modeling}). To restore microbial balance, there is a clinical approach called bacteriotherapy, which involves directly adding or removing specific microbial species to achieve a target microbial state (\cite{jones2020navigation}). This application clearly illustrates that the number of species can evolve over time.

The remainder of the paper is structured as follows. In Section \ref{VarBas}, we describe the variable-basis space together with its associated algebraic structure. In Section \ref{HybMod}, we formulate the gLV model in this variable-basis space within the framework of hybrid dynamical systems theory. In Section \ref{Bact}, we apply this model to a scenario of bacteriotherapy following antibiotic treatment. Finally, in Section \ref{Conc}, we present the conclusion.

\section{\label{VarBas}Variable-Basis State-Space}
The variable-basis space considered here, inspired by the variable-dimension space of \cite{cheng2018linear}, is one of the three components of the space introduced via the Cartesian product in (\cite{doliveira:hal-05047454}). It has two internal composition laws and one external law. In (\cite{doliveira:hal-05047454}), it is stated without proof that, with either internal law and the external law, this space forms a complete and simple semi-vector space over the real numbers $\mathbb{R}$. We present a proof here, starting with a description of the space and its composition laws.

Let us assume that $\mathcal{I} \subset \mathbb{N}$ is a countable set of labels associated, in the context of the intestinal microbiota, with microbial species preserved in the global intestinal microbiota repository (\cite{dominguez2025microbiota}). Let $\mathcal{B} = \{\mathbf{b}_i \mid i \in \mathcal{I}\}$ be a set of linearly independent vectors indexed by $\mathcal{I}$. We denote by $2^{\mathcal{B}}$ the power set of $\mathcal{B}$, so that for any $B \in 2^{\mathcal{B}}$, we have $B \subset \mathcal{B}$. Both the empty set $\varnothing$ and $\mathcal{B}$ itself belong to $2^{\mathcal{B}}$. For any $B \in 2^{\mathcal{B}}$, we define $I(B) = \{i \in \mathcal{I} \mid \mathbf{b}_i \in B\}$ as the set of labels associated with the vectors in $B$. Finally, $B$ generates a subspace denoted $\mathcal{V}(B)$, defined as
\begin{equation}
\label{attVector}
\mathcal{V}(B) \!= \!\operatorname{span}(B) \!=\! \left\{ \sum_{i \in I(B)} \!\!\!\alpha_i \mathbf{b}_i \mid \alpha_i \in \mathbb{R}, \mathbf{b}_i \in B \right\}.
\end{equation}

\vspace{-2 mm}

For any $B \subset \mathcal{B}$, the space $\mathcal{V}(B)$, equipped with vector addition, is a vector space over $\mathbb{R}$. When $B = \varnothing$, $\mathcal{V}(B)$ is the trivial subspace, whose only element is the null vector $\mathbf{0}$, and it has dimension zero. When $B = \mathcal{B}$, we speak of the maximal-basis space. We denote by $\mathbf{x}_{\mathcal{B}}^{0}$ the maximal zero-coordinate vector, whose components satisfy $x_k = 0$ for all $k \in I(\mathcal{B})$.

\begin{defn}
The variable-basis space generated by the subsets of $\mathcal{B}$ is the space denoted by $\mathcal{V}$ and given by
\[
\mathcal{V} \defeq \bigcup\limits_{B \in 2^{\mathcal{B}_v}} \mathcal{V}(B).
\]
An element of $\mathcal{V}$, denoted $\mathbf{x}_B$, is said to be defined on the basis $B \in 2^{\mathcal{B}}$, and can be written as
\[
\mathbf{x}_B \defeq \sum\limits_{k \in I(B)} x_k \mathbf{b}_k, \quad \text{where } x_k \in \mathbb{R}.
\]
\end{defn}

\begin{defn}
\label{CompInterLaw}
Let $\cup$ and $\cap$ denote the union and intersection operations from set theory. We denote by $+_\star$, with  $\star \in \{\cup, \cap\}$, an operation arising from the mapping  $+_\star : \mathcal{V} \times \mathcal{V} \to \mathcal{V}$. For any pair  \((\mathbf{x}_{B}, \mathbf{y}_{D}) \in \mathcal{V}^2\), where   \(\mathbf{x}_{B} = \sum_{k \in I(B)} x_k \mathbf{b}_k\) and \(\mathbf{y}_{D} = \sum_{l \in I(D)} y_l \mathbf{b}_l\), the operation $+_\star$ is defined by
\[
\mathbf{x}_{B} +_{\star} \mathbf{y}_{D}
= \mathbf{x}_{B \star D} + \mathbf{y}_{D \star B},
\quad \text{avec } \star \in \{\cup, \cap\},
\]
where \(\mathbf{x}_{B \star D}\) and \(\mathbf{y}_{D \star B}\) denote the projections of \(\mathbf{x}_{B}\) and \(\mathbf{y}_{D}\)  onto the basis \(B \star D = D \star B\). More explicitly,
\[
\mathbf{x}_{B} +_{\star} \mathbf{y}_{D}
= \sum_{p \in I(B \star D)} (x_p + y_p) \mathbf{b}_p,
\quad \text{with } \star \in \{\cup, \cap\}.
\]
If $\star = \cup$, then for any \(p \in I(B)\) such that \(p \notin I(D)\) (and vice versa), let \(y_p = 0\) (ou \(x_p = 0\), respectively).
\end{defn}

The operation $+_{\cup}$ is called the sum over the united basis, and $+_{\cap}$ is called the sum over the common basis. Using the first, the result of the operation has an expanded basis (or dimension), whereas with the second, the basis (or dimension) is reduced.
\begin{defn}
\label{LoiMulti}
Let $\mathbb{R}$ be the field of real numbers. The external multiplicative law is the mapping
\[
\begin{aligned}
\scal : \mathbb{R} \times \mathcal{V} &\longrightarrow \mathcal{V} \\
(\alpha, \mathbf{x}_{B}) &\longmapsto \alpha \scal \mathbf{x}_{B} = \sum_{k \in I(B)} \alpha x_k \mathbf{b}_k,
\end{aligned}
\]
where, for all $k \in I(B)$, $x_k \in \mathbb{R}$. For all $\alpha,\beta \in \mathbb{R}$, $\mathbf{x}_{B}, \mathbf{y}_{D} \in \mathcal{V}$ and for each $\star \in \{\cup, \cap\}$ the law $\scal$ satisfies:
\begin{enumerate}
\item Distributivity with respect to the internal law $+_\star$:
\[
\alpha \scal (\mathbf{x}_{B} +_\star \mathbf{y}_{D}) = \alpha \scal \mathbf{x}_{B} +_\star \alpha \scal \mathbf{y}_{D};
\]
\item Distributivity with respect to addition in $\mathbb{R}$:
\[
(\alpha + \beta) \scal \mathbf{x}_{B} = \alpha \scal \mathbf{x}_{B} +_\star \beta \scal \mathbf{x}_{B};
\]
\item Associativity with respect to real multiplication:
\[
(\alpha \beta) \scal \mathbf{x}_{B} = \alpha \scal (\beta \scal \mathbf{x}_{B})
\]
\item Existence of a neutral element:
\[
1 \scal \mathbf{x}_{B} = \mathbf{x}_{B}.
\]
\end{enumerate}
\end{defn}

\begin{thm}
\label{SemiSpaceVeCup}
The triple $(\mathcal{V}, +_\cup, \scal)$ is a complete and simple semi-vector space over $\mathbb{R}$, whose neutral element is the zero vector $\mathbf{0}$ of null dimension.
\end{thm}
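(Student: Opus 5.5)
The plan is to check the semi-vector space axioms directly from Definitions~\ref{CompInterLaw} and~\ref{LoiMulti}, and then the two extra qualifiers. Throughout, every element $\mathbf{x}_B$ is identified with the pair $(B,(x_k)_{k\in I(B)})$. Since $\mathcal{B}$ is linearly independent, the coordinates are unique, and two elements are equal if and only if they have the same basis and the same coordinates. Whenever $B\subset D$ we extend $\mathbf{x}_B$ by zeros on $I(D)\setminus I(B)$, exactly as in the convention of Definition~\ref{CompInterLaw}. With these conventions every identity below reduces to two checks: an identity between bases in $2^{\mathcal{B}}$, and a coordinatewise identity in $\mathbb{R}$.

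\textbf{Additive structure.} First, $(\mathcal{V},+_\cup)$ is a commutative monoid.
\begin{itemize}
\item \emph{Closure.} $B\cup D\in 2^{\mathcal{B}}$, so $\mathbf{x}_B+_\cup\mathbf{y}_D\in\mathcal{V}(B\cup D)\subset\mathcal{V}$.
\item \emph{Commutativity.} This follows from $B\cup D=D\cup B$ together with $x_p+y_p=y_p+x_p$.
\item \emph{Associativity.} Both $(\mathbf{x}_B+_\cup\mathbf{y}_D)+_\cup\mathbf{z}_E$ and $\mathbf{x}_B+_\cup(\mathbf{y}_D+_\cup\mathbf{z}_E)$ live on $B\cup D\cup E$, and both have coordinate $x_p+y_p+z_p$, where missing coordinates are read as $0$. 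The only point to justify is that zero-extension commutes with taking sums. This holds because the extension of $\mathbf{x}_B$ to $B\cup D$ and then to $B\cup D\cup E$ coincides with its direct extension to $B\cup D\cup E$.
\item \emph{Neutral element.} The element $\mathbf{0}=\mathbf{x}_\varnothing$ satisfies $\mathbf{x}_B+_\cup\mathbf{0}=\mathbf{x}_{B\cup\varnothing}=\mathbf{x}_B$. It is the unique neutral element, since a neutral $\mathbf{e}_E$ must satisfy $E\cup\varnothing=\varnothing$, hence $E=\varnothing$.
\end{itemize}

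\textbf{Scalar multiplication.} Next come the four properties listed in Definition~\ref{LoiMulti}, for $\star=\cup$.
\begin{itemize}
\item \emph{Distributivity over $+_\cup$.} $\alpha\scal(\mathbf{x}_B+_\cup\mathbf{y}_D)=\alpha\scal\mathbf{x}_B+_\cup\alpha\scal\mathbf{y}_D$ holds because $\alpha\scal$ preserves the basis and $\alpha(x_p+y_p)=\alpha x_p+\alpha y_p$, with $\alpha\cdot0=0$ on the padded coordinates.
\item \emph{Distributivity over real addition.} $(\alpha+\beta)\scal\mathbf{x}_B=\alpha\scal\mathbf{x}_B+_\cup\beta\scal\mathbf{x}_B$ holds because both sides are on the basis $B\cup B=B$.
\item \emph{Associativity and unit.} $(\alpha\beta)\scal\mathbf{x}_B=\alpha\scal(\beta\scal\mathbf{x}_B)$ and $1\scal\mathbf{x}_B=\mathbf{x}_B$ are immediate from the coordinate formula.
\end{itemize}

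\textbf{Why it is only a semi-vector space.} One subtlety should be stressed. We have $0\scal\mathbf{x}_B=\mathbf{x}^0_B$, the zero-coordinate vector on $B$, and this differs from $\mathbf{0}$ whenever $B\neq\varnothing$. Consequently $(-1)\scal\mathbf{x}_B$ is not an additive inverse, since $\mathbf{x}_B+_\cup(-1)\scal\mathbf{x}_B=\mathbf{x}^0_B\neq\mathbf{0}$. This is precisely why the structure is a semi-vector space and not a vector space.

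\textbf{Simplicity.} Here the plan is to show that $\mathbf{0}$ is the only element with an additive inverse. Suppose $\mathbf{x}_B+_\cup\mathbf{y}_D=\mathbf{0}$. Comparing bases gives $B\cup D=\varnothing$, so $B=D=\varnothing$ and $\mathbf{x}_B=\mathbf{0}$.

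\textbf{Completeness.} This is the step I expect to be the main obstacle. The difficulty is not computational: the statement does not recall what ``complete'' means, so the plan is to use the definition of complete semi-vector space from the framework of (\cite{doliveira:hal-05047454}) and its sources. If it is the usual condition that $\mathbf{x}+\mathbf{y}=\mathbf{x}$ forces $\mathbf{y}$ to be neutral, or an analogous cancellation-type property, I would argue as follows.
\begin{itemize}
\item \emph{Basis part.} This is handled by set algebra in $2^{\mathcal{B}}$. For example, $B\cup D=B$ forces $D\subset B$.
\item \emph{Coordinate part.} This is handled in $\mathbb{R}$, where $x_p+y_p=x_p$ forces $y_p=0$.
\end{itemize}
In the example, the outcome is $\mathbf{y}_D=\mathbf{x}^0_D$ with $D\subset B$. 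So I must check carefully whether the intended definition treats such zero-coordinate vectors $\mathbf{x}^0_D$ as acceptable. The choice of definition is where the argument could break.
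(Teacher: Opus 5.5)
Your proposal is correct and follows the paper's route. Both the neutral element and the lack of inverses are obtained by first comparing bases ($E\cup\varnothing=\varnothing$, resp.\ $B\cup D=\varnothing$) and then coordinates. You also check more than the paper does: closure, associativity and the scalar axioms. Your uniqueness argument, testing a candidate neutral element against $\mathbf{0}$, is cleaner than the paper's case split $D=\varnothing$ or $D=B$.

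The completeness step you flag as the main obstacle is not one. In the Note stated right after the two theorems (following Jany\v{s}ka et al.), ``complete'' simply means that $(\mathcal{V},+_\cup)$ admits a neutral element, and your neutral-element bullet already establishes this. The cancellation-type reading you considered would in fact be false here: $\mathbf{x}_B+_\cup\mathbf{x}^0_D=\mathbf{x}_B$ for every $D\subseteq B$, while $\mathbf{x}^0_D\neq\mathbf{0}$ whenever $D\neq\varnothing$.
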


\begin{thm}
\label{SemiSpaceVeCap}
The triple $(\mathcal{V}, +_\cap, \scal)$ is a complete and simple semi-vector space over $\mathbb{R}$, whose neutral element is the maximal zero-coordinate vector $\mathbf{x}_{\mathcal{B}}^{0}$.
\end{thm}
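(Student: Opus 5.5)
The plan is to check the axioms in three layers, reusing as much as possible of the argument for Theorem~\ref{SemiSpaceVeCup}. First, $(\mathcal{V}, +_\cap)$ is a commutative monoid with neutral element $\mathbf{x}_{\mathcal{B}}^{0}$. Second, the external law $\scal$ is compatible with $+_\cap$. Third, the ``complete'' and ``simple'' properties follow from a description of which elements have a symmetric. Throughout, I will represent an element $\mathbf{x}_B$ as a pair: its basis $B \in 2^{\mathcal{B}}$ and its coordinate family $(x_k)_{k\in I(B)}$. Every identity then splits into an identity between bases, which is just set algebra for $\cap$, and an identity between coordinates, which is real arithmetic index by index.

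\emph{Monoid structure.} Closure is immediate, since $B\cap D \in 2^{\mathcal{B}}$ and the coordinates $x_p+y_p$ are real. Commutativity follows from $B\cap D = D\cap B$ and commutativity of $+$ in $\mathbb{R}$. For associativity, both $(\mathbf{x}_B +_\cap \mathbf{y}_D) +_\cap \mathbf{z}_E$ and $\mathbf{x}_B +_\cap (\mathbf{y}_D +_\cap \mathbf{z}_E)$ live on $B\cap D\cap E$. Their $p$-th coordinate is $x_p+y_p+z_p$ for every $p \in I(B\cap D\cap E)$, because projecting onto a smaller basis simply discards coordinates. For the neutral element, $\mathbf{x}_B +_\cap \mathbf{x}^0_{\mathcal{B}}$ has basis $B\cap\mathcal{B}=B$ and coordinates $x_p+0$, so it equals $\mathbf{x}_B$. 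Uniqueness also holds: if $\mathbf{e}_E$ is neutral, then testing on an element over $\mathcal{B}$ forces $E\supseteq\mathcal{B}$, hence $E=\mathcal{B}$, and the coordinates must then vanish.

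\emph{Compatibility with $\scal$.} Scalar multiplication preserves the basis, so each of the four properties of Definition~\ref{LoiMulti} reduces to the same basis on both sides plus a coordinatewise real identity. For example, $\alpha\scal(\mathbf{x}_B+_\cap\mathbf{y}_D)$ and $\alpha\scal\mathbf{x}_B +_\cap \alpha\scal\mathbf{y}_D$ both live on $B\cap D$ with coordinates $\alpha(x_p+y_p)$. Likewise, $(\alpha+\beta)\scal\mathbf{x}_B$ and $\alpha\scal\mathbf{x}_B+_\cap\beta\scal\mathbf{x}_B$ both live on $B\cap B=B$. I will point out one subtlety: $0\scal\mathbf{x}_B$ is the zero-coordinate vector on $B$, not the neutral element $\mathbf{x}^0_{\mathcal{B}}$. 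This does not break any axiom, but it is exactly why the structure is only a semi-vector space.

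\emph{Complete and simple: the main obstacle.} The crux is to characterize the symmetrizable elements. Suppose $\mathbf{x}_B +_\cap \mathbf{y}_D = \mathbf{x}^0_{\mathcal{B}}$. Comparing bases gives $B\cap D=\mathcal{B}$, hence $B=D=\mathcal{B}$, and comparing coordinates gives $y_p=-x_p$. So the symmetrizable elements are exactly those on the maximal basis, and they form the vector space $\mathcal{V}(\mathcal{B})$ embedded in $\mathcal{V}$. Every element on a strictly smaller basis has no symmetric, and neither does the zero vector $\mathbf{0}$ of null dimension, since $\varnothing\cap D=\varnothing\neq\mathcal{B}$. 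I will then match this description to the definitions of ``complete'' and ``simple'' used in the proof of Theorem~\ref{SemiSpaceVeCup} and in \cite{doliveira:hal-05047454}. This is the step where I expect friction. Under the common convention where ``simple'' means that only the neutral element is symmetrizable and ``complete'' means that all elements are, the two notions would be incompatible for $+_\cap$. I will therefore take the definitions exactly as used in the $+_\cup$ case and transpose them, with the roles of $\varnothing$ and $\mathcal{B}$ exchanged: the basis $\mathcal{B}$ is absorbing-neutral for $\cap$, just as $\varnothing$ is for $\cup$. Cancellation properties, if they are part of those definitions, I will check by reading the coordinates of $\mathbf{x}_B+_\cap\mathbf{z}_E=\mathbf{y}_D+_\cap\mathbf{z}_E$ on the common basis $B\cap E = D\cap E$.
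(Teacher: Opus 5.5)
Your proposal is correct and follows the paper's route. Like the paper, you show that $B\cap D=B$ for every $B$ forces $D=\mathcal{B}$ with vanishing coordinates, and that $B\cap D=\mathcal{B}$ forces $B=D=\mathcal{B}$ and $u_k=-x_k$, so exactly the elements of $\mathcal{V}(\mathcal{B})$ are invertible; you additionally check associativity and the axioms of Definition~\ref{LoiMulti}, which the paper takes for granted. The definitional friction you anticipate is settled by the paper's Note~\ref{SemiSurCorps}, where ``complete'' means admitting a neutral element and ``simple'' means containing some element without an inverse, so your characterization yields both at once. Two small corrections: $\mathcal{B}$ is the neutral element for $\cap$, not an absorbing one; and unlike the $+_\cup$ case, a whole copy of $\mathcal{V}(\mathcal{B})$ is invertible, so under the stronger reading in which only the neutral element is invertible, the theorem would actually be false.
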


\begin{note}
\label{SemiSurCorps}
A semi-vector space is said to be complete if it admits a neutral element, and simple if it contains elements without inverses (\cite{janyvska2007semi}). It is generally defined over the semiring $\mathbb{R}^+$ rather than over the field $\mathbb{R}$, but Note 1 in (\cite{doliveira:hal-05047454}) shows that extending it to $\mathbb{R}$ is possible for certain spaces, such as the space of graphs. The same holds for $\mathcal{V}$, equipped with $+_\star$ for any $\star \in \{\cup, \cap\}$, together with the operation $\scal$.
\end{note}

\subsection{Proof of Theorem \ref{SemiSpaceVeCup}}
In order for the triple $(\mathcal{V}, +_\cup, \scal)$ to be considered a complete and simple semi-vector space over the field of real numbers $\mathbb{R}$ (see \textit{Note} \ref{SemiSurCorps}), it is necessary that it satisfies the properties stated in Definition 10 of \cite{doliveira:hal-05047454} (or Definition 1.1 of \cite{janyvska2007semi}).  Part of these properties concerns the multiplicative law, which, by definition, is satisfied in \textit{Definition} \ref{LoiMulti}. The other part consists in showing that $(\mathcal{V}, +_\cup)$ is a commutative monoid. Commutativity is straightforward to verify; we will therefore focus on the existence of a neutral element and on the fact that $\mathcal{V}$ contains elements without inverses.
\begin{prop}
The pair $(\mathcal{V}, +_\cup)$ has the zero vector $\mathbf{0}$ of null dimension as its neutral element.
\end{prop}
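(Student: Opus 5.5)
The plan is a direct verification from Definition~\ref{CompInterLaw}. The zero vector $\mathbf{0}$ of null dimension is the unique element of $\mathcal{V}(\varnothing)$, so we write it as $\mathbf{0}_{\varnothing}$, an element defined on the empty basis with $I(\varnothing)=\varnothing$. We take an arbitrary $\mathbf{x}_B = \sum_{k\in I(B)} x_k \mathbf{b}_k \in \mathcal{V}$ with $B\in 2^{\mathcal{B}}$, and compute $\mathbf{x}_B +_\cup \mathbf{0}_{\varnothing}$ and $\mathbf{0}_{\varnothing} +_\cup \mathbf{x}_B$.

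\textbf{Step 1 (the basis).} Since $B\cup\varnothing = \varnothing\cup B = B$, both sums are defined on the basis $B$. Hence $I(B\cup\varnothing)=I(B)$.

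\textbf{Step 2 (the coordinates).} By the explicit formula,
\[
\mathbf{x}_B +_\cup \mathbf{0}_{\varnothing} = \sum_{p\in I(B)} (x_p + y_p)\mathbf{b}_p .
\]
Every $p\in I(B)$ satisfies $p\notin I(\varnothing)$, so the padding convention of Definition~\ref{CompInterLaw} sets $y_p=0$. The sum therefore reduces to $\sum_{p\in I(B)} x_p\mathbf{b}_p=\mathbf{x}_B$. The other order follows either by the same computation or by commutativity of $+_\cup$, which is symmetric in the definition since $B\cup D = D\cup B$.

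\textbf{Step 3 (the degenerate case).} When $B=\varnothing$, the sum is empty and the result is $\mathbf{0}_{\varnothing}$ itself. This case is consistent with Step 2.

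\textbf{Main subtlety.} The only delicate point is identity in $\mathcal{V}$. Two elements count as equal only if they have the same basis and the same coordinates. For example, $\mathbf{0}_{\varnothing}$ is distinct from $\mathbf{x}^0_{B}$ for $B\neq\varnothing$, even though both are the null vector of the ambient span. So the argument must check the basis explicitly, as Step 1 does, and not just the coordinates. The same observation shows that no nonempty-basis zero vector can serve as the neutral element. Indeed, $\mathbf{x}^0_{D} +_\cup \mathbf{x}_B$ lives on $B\cup D$, which differs from $B$ whenever $D\not\subset B$. Uniqueness of the neutral element is in any case the standard monoid argument.
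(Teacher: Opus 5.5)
Your proof is correct, but it runs in the opposite direction from the paper's. The paper starts from an arbitrary candidate $\mathbf{e}_D$ with $\mathbf{x}_B +_\cup \mathbf{e}_D = \mathbf{x}_B$ and deduces $B\cup D = B$. It then splits into $D=\varnothing$, which gives $\mathbf{0}$, and $D=B$, which forces $e_k=0$ and hence $\mathbf{e}^0_B$. Finally it discards $\mathbf{e}^0_B$ by testing it against other elements. It is therefore a necessity/uniqueness argument, and the fact that $\mathbf{0}$ actually satisfies $\mathbf{x}_B +_\cup \mathbf{0} = \mathbf{x}_B$ is only implicit.

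You instead verify sufficiency directly, using the basis $B\cup\varnothing=B$ together with the padding convention $y_p=0$. You then get uniqueness for free from the generic argument $e = e +_\cup e' = e'$. Your route is shorter and supplies the one computation the paper never writes out. The paper's route has the merit of showing why the natural competitors $\mathbf{e}^0_B$ fail, which is the point of contrast with Theorem~\ref{SemiSpaceVeCap}, where a zero-coordinate vector on the maximal basis is the neutral element.

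Your closing remark covers this as well, and states the failure criterion more accurately than the paper does. The correct condition is $D\not\subseteq B$; for instance, test against $\mathbf{0}$ itself. By contrast, the paper's case split omits $\varnothing\neq D\subsetneq B$. Its claim that $\mathbf{y}_C +_\cup \mathbf{e}^0_B \neq \mathbf{y}_C$ for every nonempty $C\neq B$ is also false when $B\subsetneq C$. Your observation that equality in $\mathcal{V}$ must compare bases as well as coordinates is the convention the paper uses tacitly when it distinguishes $\mathbf{e}^0_B$ from $\mathbf{0}$.
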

\begin{pf}
An arbitrary element $\mathbf{e}_D \in \mathcal{V}$ is said to be neutral if and only if, for every $\mathbf{x}_B \in \mathcal{V}$, we have
\begin{equation}
\label{neutrecup}
\mathbf{x}_B +_\cup \mathbf{e}_D = \mathbf{x}_B,
\end{equation}
knowing that $+_\cup$ is commutative. Equation \eqref{neutrecup} implies that $B \cup D = B$ (see \textit{Definition} \ref{CompInterLaw}). Thus, either $D = \varnothing$, in which case $\mathbf{e}_D = \mathbf{0}$ (the zero vector), or $D = B \neq \varnothing$, in which case the components of the vector $\mathbf{e}_D$ need to be determined. Suppose $D = B$. Then we have
\[
\mathbf{x}_B = \sum_{k \in I(B)} x_k \mathbf{b}_k \quad \text{and} \quad
\mathbf{e}_B = \sum_{k \in I(B)} e_k \mathbf{b}_k,
\]
with $x_k, e_k \in \mathbb{R}$ for all $k \in I(B)$. From \eqref{neutrecup}, we obtain
\[
\sum_{k \in I(B)} (x_k + e_k) \mathbf{b}_k = \sum_{k \in I(B)} x_k \mathbf{b}_k.
\]
Thus, for all $k \in I(B)$, we have $x_k + e_k = x_k$, so $e_k = 0$. The zero vector $\mathbf{0}$ and the vector $\mathbf{e}_B^0$, for which all $e_k = 0$ for $k \in I(B)$, are the only ones satisfying condition \eqref{neutrecup}.  However, only $\mathbf{0}$ is a neutral element of $\mathcal{V}$ with respect to $+_\cup$. Indeed, let $\mathbf{y}_C \in \mathcal{V}$ with $C \neq B$ and $C \neq \varnothing$; then $\mathbf{y}_C +_\cup \mathbf{e}_B^0 \neq \mathbf{y}_C$. Hence, the only element satisfying \eqref{neutrecup} is the zero vector $\mathbf{0}$ of null dimension.
\end{pf}
\begin{prop}
The pair $(\mathcal{V}, +_\cup)$ contains elements without inverses.
\end{prop}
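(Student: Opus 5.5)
The plan is to exhibit an explicit element with no inverse, and the natural candidate is any vector $\mathbf{x}_B$ with $B \neq \varnothing$. The key observation is that the basis of a $+_\cup$-sum can only grow: by Definition \ref{CompInterLaw}, $\mathbf{x}_B +_\cup \mathbf{y}_D$ is defined on the basis $B \cup D$, and $B \cup D \supseteq B$. Since the previous proposition identified the neutral element as the zero vector $\mathbf{0}$ of null dimension, which is defined on the empty basis $\varnothing$, an inverse of $\mathbf{x}_B$ would have to be some $\mathbf{y}_D \in \mathcal{V}$ with
\[
\mathbf{x}_B +_\cup \mathbf{y}_D = \mathbf{0}.
\]

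Comparing bases on both sides forces $B \cup D = \varnothing$, hence $B = \varnothing$, which contradicts $B \neq \varnothing$. Therefore no element defined on a nonempty basis has an inverse. This includes, for instance, the vector $\mathbf{e}_B^0$ with all zero coordinates, which shows that having zero coordinates does not help. By contrast, $\mathbf{0}$ is trivially its own inverse, so the set of invertible elements is exactly $\{\mathbf{0}\}$.

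To make the statement concrete, I will instantiate it with a single basis vector. Pick any $i \in \mathcal{I}$ (nonempty, as $\mathcal{B}$ is assumed to contain vectors) and take $\mathbf{x}_{\{\mathbf{b}_i\}} = \mathbf{b}_i$. For every $\mathbf{y}_D \in \mathcal{V}$, the sum $\mathbf{b}_i +_\cup \mathbf{y}_D$ is defined on a basis containing $\mathbf{b}_i$, so it differs from $\mathbf{0}$.

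The only delicate point I expect is making precise that two elements of $\mathcal{V}$ are equal only if their bases coincide. An element $\mathbf{x}_B$ carries its basis $B$ as part of its data, so $\mathbf{e}_B^0 \neq \mathbf{0}$ even though both "represent" the null vector of the ambient span. This is already the convention used implicitly in the previous proposition when it distinguishes $\mathbf{e}_B^0$ from $\mathbf{0}$. I will state it explicitly, noting that $\mathcal{V}$ is understood as the disjoint union of the $\mathcal{V}(B)$, and then rely on it for the basis comparison above.
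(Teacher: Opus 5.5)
Your proposal is correct and follows the paper's own argument: an inverse would have to satisfy $\mathbf{x}_B +_\cup \mathbf{y}_D = \mathbf{0}$, so $B \cup D = \varnothing$ forces $B = \varnothing$, and $\mathbf{0}$ is the only invertible element. Your explicit remark that elements of $\mathcal{V}$ must carry their basis as data is a worthwhile addition. If $\mathcal{V}$ were a literal union of spans, $\mathbf{e}_B^0$ and $\mathbf{0}$ would coincide and the basis comparison would break; the paper relies on this convention only implicitly.
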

\begin{pf}
An element $\mathbf{u}_D \in \mathcal{V}$ is said to be the inverse of an element $\mathbf{x}_B \in \mathcal{V}$ if and only if $\mathbf{x}_B +_\cup \mathbf{u}_D = \mathbf{0}$, knowing that $+_\cup$ is commutative. This condition implies that $B \cup D = \varnothing$, which in turn leads to $B = \varnothing$ and $D = \varnothing$. Therefore, the only element admitting an inverse is the zero vector $\mathbf{0}$.
\end{pf}

The set of arguments developed in this subsection, together with \textit{Note} \ref{SemiSurCorps}, leads to the conclusion that $(\mathcal{V}, +_\cup, \scal)$ forms a complete and simple semi-vector space over $\mathbb{R}$, whose neutral element is the zero vector $\mathbf{0}$ of null dimension.

\subsection{Proof of Theorem \ref{SemiSpaceVeCap}}
As in the proof of \textit{Theorem} \ref{SemiSpaceVeCup}, we will focus here on the existence of a neutral element and on the fact that $\mathcal{V}$ contains elements without inverses.
\begin{prop}
The pair $(\mathcal{V}, +_\cap)$ has the maximal zero-coordinate vector $\mathbf{x}_{\mathcal{B}}^0$ as its neutral element.
\end{prop}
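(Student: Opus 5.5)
We will mirror the structure of the proof for $(\mathcal{V}, +_\cup)$ given above, now using the intersection rule of \textit{Definition} \ref{CompInterLaw}. An element $\mathbf{e}_D \in \mathcal{V}$ is neutral if and only if, for every $\mathbf{x}_B \in \mathcal{V}$,
\[
\mathbf{x}_B +_\cap \mathbf{e}_D = \mathbf{x}_B .
\]
Since $+_\cap$ is commutative, one-sided neutrality suffices. The argument has two parts: identify the basis $D$ from the basis condition, then identify the coordinates.

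\textbf{Basis.} By \textit{Definition} \ref{CompInterLaw}, the left-hand side is defined on $B \cap D$. The identity therefore forces $B \cap D = B$, that is $B \subset D$. This must hold for every $B \in 2^{\mathcal{B}}$, in particular for $B = \mathcal{B}$, so $D = \mathcal{B}$. Note the contrast with the union case: a minimal basis was needed there, whereas here the basis must be maximal. This single choice of test element $B=\mathcal{B}$ determines $D$, which avoids the case analysis of the previous proof.

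\textbf{Coordinates.} With $D = \mathcal{B}$, write $\mathbf{e}_{\mathcal{B}} = \sum_{k \in I(\mathcal{B})} e_k \mathbf{b}_k$. Then
\[
\mathbf{x}_B +_\cap \mathbf{e}_{\mathcal{B}} = \sum_{p \in I(B)} (x_p + e_p)\mathbf{b}_p .
\]
Comparing coefficients, which is legitimate because $\mathcal{B}$ is linearly independent, gives $e_p = 0$ for every $p \in I(B)$. Letting $B$ range over all of $2^{\mathcal{B}}$ (or again simply taking $B=\mathcal{B}$) yields $e_k = 0$ for all $k \in I(\mathcal{B})$. Hence $\mathbf{e}_D = \mathbf{x}_{\mathcal{B}}^0$.

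\textbf{Verification.} Conversely, $\mathbf{x}_{\mathcal{B}}^0$ is indeed neutral: for any $\mathbf{x}_B$ we have $B \cap \mathcal{B} = B$ and all added coordinates vanish. This proves both existence and uniqueness.

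\textbf{Main obstacle.} The delicate point is an infinite label set $\mathcal{I}$. In that case $\mathbf{x}_{\mathcal{B}}^0$ must be a legitimate element of $\mathcal{V}(\mathcal{B})$, and $\operatorname{span}$ in \eqref{attVector} involves possibly infinite sums. I would handle this by observing that the zero-coefficient family is admissible under any reading of the span, since all its coefficients vanish, so it coincides with the null vector of $\mathcal{V}(\mathcal{B})$. The paper treats it as a distinct labelled object $\mathbf{x}_{\mathcal{B}}$ carrying its basis, which is consistent with how bases are tracked in $\mathcal{V}$. I would state this convention explicitly before concluding.
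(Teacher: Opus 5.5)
Your proof is correct and follows the paper's argument essentially step for step: the basis condition $B \cap D = B$ for every $B$ forces $D = \mathcal{B}$, and comparing coefficients with $B = \mathcal{B}$ forces $e_k = 0$ for all $k \in I(\mathcal{B})$. Two points you make explicit are only implicit in the paper: the converse check that $\mathbf{x}_{\mathcal{B}}^0$ is actually neutral, and the convention that elements of $\mathcal{V}$ carry their basis.
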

\begin{pf}
An arbitrary element $\mathbf{e}_D \in \mathcal{V}$ is said to be neutral if and only if, for every $\mathbf{x}_B \in \mathcal{V}$, we have
\begin{equation}
\label{neutrecap}
\mathbf{x}_B +_\cap \mathbf{e}_D = \mathbf{x}_B,
\end{equation}
knowing that $+_\cap$ is commutative. Equation \eqref{neutrecap} leads to $B \cap D = B$ (see \textit{Definition } \ref{CompInterLaw}), which implies that $B \subseteq D$. In order for this relation to hold for every $B \in 2^{\mathcal{B}}$, it is necessary that $D = \mathcal{B}$. Consider the extreme case $B = D = \mathcal{B}$ rather than $B \subset D$. Under these conditions, we have
\vspace{- 2mm}
\[
\mathbf{x}_{\mathcal{B}} = \sum_{k \in I(\mathcal{B})} x_k \mathbf{b}_k
\quad \text{and} \quad
\mathbf{e}_{\mathcal{B}} = \sum_{k \in I(\mathcal{B})} e_k \mathbf{b}_k.
\]
From \eqref{neutrecap}, it follows that $x_k + e_k = x_k$, so $e_k = 0$ for all $k \in I(\mathcal{B})$. Thus, the maximal zero-coordinate vector $\mathbf{x}_{\mathcal{B}}^0$ is the neutral element of $\mathcal{V}$ with respect to the operation $+_\cap$.
\end{pf}

\begin{prop}
The pair $(\mathcal{V}, +_\cap)$ contains elements without inverses.
\end{prop}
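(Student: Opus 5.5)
We mirror the argument used for $(\mathcal{V}, +_\cup)$, now with the neutral element $\mathbf{x}_{\mathcal{B}}^0$ supplied by the preceding proposition.

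By definition, $\mathbf{u}_D \in \mathcal{V}$ is an inverse of $\mathbf{x}_B \in \mathcal{V}$ if and only if
\[
\mathbf{x}_B +_\cap \mathbf{u}_D = \mathbf{x}_{\mathcal{B}}^0 ,
\]
where the order of the operands does not matter because $+_\cap$ is commutative. The first step is to compare bases. By Definition~\ref{CompInterLaw}, the left-hand side is defined on the basis $B \cap D$, while the right-hand side is defined on $\mathcal{B}$. Equating them forces $B \cap D = \mathcal{B}$. Since both $B$ and $D$ are subsets of $\mathcal{B}$, this gives $B = D = \mathcal{B}$.

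Hence any $\mathbf{x}_B$ with $B \subsetneq \mathcal{B}$ has no inverse. To finish, we exhibit such an element. Whenever $\mathcal{B} \neq \varnothing$, the null-dimension vector $\mathbf{0}$ (basis $\varnothing$) works, as does any vector on a proper subset of $\mathcal{B}$. This already shows that $\mathcal{V}$ contains elements without inverses.

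For completeness, we also treat the case $B = D = \mathcal{B}$. Comparing coordinates gives $x_k + u_k = 0$ for every $k \in I(\mathcal{B})$. So exactly the vectors on the maximal basis are invertible, with inverse $(-1)\scal \mathbf{x}_{\mathcal{B}}$. This mirrors the $+_\cup$ case, where only $\mathbf{0}$ was invertible.

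The only delicate step is the basis-matching argument, namely that equality of two elements of $\mathcal{V}$ entails equality of their bases. We read this as implicit in the definition of $\mathcal{V}$ as a union of the $\mathcal{V}(B)$ in which each element $\mathbf{x}_B$ carries its basis $B$. This is the same convention already used in the proofs for $+_\cup$ and for the neutral element of $+_\cap$, so we invoke it in the same way. We also need the mild assumption $\mathcal{B} \neq \varnothing$, which holds since $\mathcal{I}$ labels at least one species.
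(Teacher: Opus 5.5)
Your proof is correct and follows the paper's argument: matching bases in $\mathbf{x}_B +_\cap \mathbf{u}_D = \mathbf{x}_{\mathcal{B}}^0$ forces $B = D = \mathcal{B}$, and comparing coordinates gives $u_k = -x_k$, so only elements of $\mathcal{V}(\mathcal{B})$ are invertible. Your additions are left implicit in the paper and make the argument slightly more complete: you exhibit an explicit non-invertible element such as $\mathbf{0}$, you assume $\mathcal{B} \neq \varnothing$, and you point out that each element carries its basis as a tag.
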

\begin{pf}
An element $\mathbf{u}_D \in \mathcal{V}$ is said to be the inverse of an element $\mathbf{x}_B \in \mathcal{V}$ if and only if
\begin{equation}
\label{InvCap}
\mathbf{x}_B +_\cap \mathbf{u}_D = \mathbf{x}_{\mathcal{B}}^0,
\end{equation}
knowing that $+_\cap$ is commutative. Equation \eqref{InvCap} leads to $B \cap D = \mathcal{B}$, which necessarily implies that $B = D = \mathcal{B}$. Thus, for
\[
\mathbf{x}_{\mathcal{B}} = \sum_{k \in I(\mathcal{B})} x_k \mathbf{b}_k
\quad \text{and} \quad
\mathbf{u}_{\mathcal{B}} = \sum_{k \in I(\mathcal{B})} u_k \mathbf{b}_k,
\]
from \eqref{InvCap} we have $x_k + u_k = 0$, hence $u_k = -x_k$ for all $k \in I(\mathcal{B})$. Therefore, only the elements of $\mathcal{V}(\mathcal{B})$ admit an inverse, and not the other elements of $\mathcal{V}$.
\end{pf}

Just as in the proof of \textit{Theorem} \ref{SemiSpaceVeCup}, the set of arguments developed in this subsection, together with \textit{Note} \ref{SemiSurCorps}, leads to the conclusion that $(\mathcal{V}, +_\cap, \scal)$ forms a complete and simple semi-vector space over $\mathbb{R}$, whose neutral element is the maximal zero-coordinate vector $\mathbf{x}_{\mathcal{B}}^0$.

\section{\label{HybMod}Hybrid-Dynamics gLV Model}
The evolution of a system in $\mathcal{V}$, whose state is a vector that may change basis or dimension over time, can be addressed within the framework of hybrid dynamical systems theory. The change of basis is approached as a discrete event, whereas the coordinates evolve continuously. Here we adopt the formalism of  \cite{goebel2012hybrid} and introduce below the elements required for the model.
\begin{defn}
A subset $\mathcal{T} \subset \mathbb{R}_{\ge 0} \times \mathbb{N}$ is a compact hybrid time domain if
\vspace{-2 mm}
\[
\mathcal{T} \defeq \bigcup_{k = 0}^{K-1} \big([\tau_k, \tau_{k+1}], k\big)
\]
for a finite sequence of times $0=\tau_0 \le \tau_1 \le \dots \le \tau_K$.  The subset $\mathcal{T} \subset \mathbb{R}_{\ge 0} \times \mathbb{N}$ is a hybrid time domain if it is the union of a nondecreasing sequence of compact hybrid time domains, that is, if $\mathcal{T}$ is the union of compact hybrid time domains $\mathcal{T}_k$ satisfying:
\[
\mathcal{T}_0 \subset \mathcal{T}_1 \subset \dots \subset \mathcal{T}_k \subset \dots
\]
\end{defn}

\begin{defn}
The state of a hybrid system in $\mathcal{V}$ at time $(t,k) \in \mathcal{T}$ is an element $\mathbf{x}_{B(k)}(t,k) \in \mathcal{V}(B_k) \subset \mathcal{V}$, where $B(k) \in 2^{\mathcal{B}}$ denotes the basis of the state vector after the $k$-th jump. For simplicity, we write $\mathbf{x}_{B_k}(t) \defeq \mathbf{x}_{B(k)}(t,k)$. This state satisfies that, for every $k \in \mathbb{N}$, the function $t \mapsto \mathbf{x}_{B_k}(t))$ is absolutely continuous on the interval $[\tau_k, \tau_{k+1}]$, for every $([\tau_k, \tau_{k+1}], k) \subset \mathcal{T}$.
\end{defn}

\begin{defn}
The disturbance is the pair $\boldsymbol{\omega} = (u, \mathbf{v}_{L_k}) \in \mathbb{R} \times \mathcal{V})$, where $L_k \in 2^{\mathcal{B}}$. It is such that, for every $k \in \mathbb{N}$, the function $t \mapsto \boldsymbol{\omega}(t) = (u(t), \mathbf{v}_{L_k}(t))$ is Lebesgue measurable and locally essentially bounded on the interval $[\tau_k, \tau_{k+1}]$, for every $([\tau_k, \tau_{k+1}], k) \subset \mathcal{T}$.
\end{defn}

The hybrid-dynamics evolution model is based on hybrid equations, that is, on a set composed of differential equations and difference equations, subject to constraints, of the following form:
\begin{equation}
\label{HybridEq}
\begin{cases}
 (\mathbf{x}_{B_k}(t),u(t))\!\in\! F, & \!\! \dot{\mathbf{x}}_{B_k}(t)\! =\!f(\mathbf{x}_{B_k}(t),u(t)),  \\
 (\mathbf{x}_{B_k}(t),\mathbf{v}_{L_k}(t))\!\in\! J, & \!\! \mathbf{x}^+_{B_{k+1}}(t) = j(\mathbf{x}_{B_k}(t),\mathbf{v}_{L_k}(t)),
\end{cases}
\end{equation}
where $F \subset \mathcal{V} \times \mathbb{R}$ and $J \subset \mathcal{V}^2$ are the flow set and the jump set, respectively, while $f$ and $j$ are the flow map and the jump map, respectively. Note that, for any $B \in 2^{\mathcal{B}}$, the flow map is well defined as a mapping $f : \mathcal{V}(B) \times \mathbb{R} \to \mathcal{V}(B)$.  It describes the continuous dynamics of the system, in this case that of the gLV model. Its vector form, derived from the description proposed in \cite{jones2020navigation}, for a microbiota dynamic under the effect $u(t)$ of an antibiotic, is written as follows:
\begin{equation}
\label{gLVec}
\begin{split}
f(\mathbf{x}_{B_k}(t),u(t)) = \mathbf{x}_{B_k}(t) \odot\!  \left[ \rho_{B_k} +\right.&  W_{B_k} \mathbf{x}_{B_k}(t) \\ 
&\left.+ u(t)\scal \boldsymbol{\varepsilon}_{B_k}\right]
\end{split}
\end{equation}

\vspace{-2 mm}

where $\odot$ denotes the Hadamard product, also called element-wise multiplication. The vector $\boldsymbol{\rho}_{B_k}$ contains the growth rates of the species $i \in I(B_k)$. The matrix $W_{B_k}$ is an interaction matrix whose entries $(p,q) \in I(B_k) \times I(B_k)$ encode the ecological effects of species $q$ on species $p$. Finally, the vector $\boldsymbol{\varepsilon}_{B_k}$ collects the susceptibilities $\varepsilon_p$, for $p \in I(B_k)$, of species $p$ to the antibiotic. The jump map, on the other hand, is a mapping $j: \mathcal{V}^2 \to \mathcal{V}$, with the notation $\mathbf{x}^+_{B_{k+1}}(t)$ denoting $\lim_{t \to \tau_{k+1}^+} \mathbf{x}_{B_{k+1}}(t)$. Let the jump set be $J = J_{\nearrow} \cup J_{\searrow} \cup (J_{\star} \times \mathcal{V})$, with definitions given in equation \eqref{jumpset}. We then consider three implementations of this function:
\begin{equation}
\begin{split}
j(&\mathbf{x}_{B_k}(t),\mathbf{v}_{L_k}(t)) =\\
&\begin{cases}
\mathbf{x}_{B_k}(t) +_\cup \mathbf{v}_{L_k}(t), & \text{if } (\mathbf{x}_{B_k}(t),\mathbf{v}_{L_k}(t)) \in J_{\nearrow} ,\\
\mathbf{x}_{B_k}(t) +_\cap \mathbf{v}_{L_k}(t), & \text{if } (\mathbf{x}_{B_k}(t),\mathbf{v}_{L_k}(t)) \in J_{\searrow} ,\\
\tilde{j}(\mathbf{x}_{B_k}(t)), & \text{if }  \mathbf{x}_{B_k}(t) \in J_\star.
\end{cases}
\end{split}
\end{equation}
The vector $\mathbf{v}_{L_k}$ models an external perturbation of the system. In the first case, it increases the size of the state vector by adding species; in the second, it decreases it by removing species. The third situation corresponds to a constraint linked to the intrinsic dynamics of the system, described by the function $f$ in \eqref{gLVec}. It illustrates, for example, the disappearance or appearance of species, or the mutation of an existing species under the effect of antibiotics. These changes occur when the abundances cross certain thresholds, as described in \cite{taylor1988construction}. To illustrate, let us assume $J_\star = J_{+} \cup J_{-}$, whose definitions are given in \eqref{jumpset}, with 
\begin{equation}
\label{autojump}
\begin{split}
 \tilde{j}(\mathbf{x}_{B_k}(t))& =\\
 &\begin{cases}
\mathbf{x}_{B_k}(t) +_\cup g(\mathbf{x}_{B_k}(t)), & \text{if } \mathbf{x}_{B_k}(t) \in J_{+} ,\\
\mathbf{x}_{B_k}(t) +_\cap h(\mathbf{x}_{B_k}(t)) & \text{if } \mathbf{x}_{B_k}(t)\in J_{-},
\end{cases}
\end{split}
\end{equation}
where the function $g(\mathbf{x}_{B_k}(t)) = \xi^+ \scal \mathbf{1}(\mathbf{x}_{B_k}(t)) +_\cup \mathbf{y}_D$, with $\mathbf{y}_D \in \mathcal{V}$ such that $B_k \cap D = \varnothing$ and $\xi^+ \in \mathbb{R}$, represents a small perturbation induced by the appearance of species. In the stochastic framework, one can assume that the set $D \in 2^{\mathcal{B}}$ is associated with a conditional probability given $B_k$ and the values of the coordinates of $\mathbf{x}_{B_k}$. The function $h(\mathbf{x}_{B_k}(t)) = \xi^- \scal \mathbf{1}_H$, with $H \subset B_k$ and $\xi^- \in \mathbb{R}$, represents the small perturbation induced by the disappearance of species. The function $\mathbf{1}(\mathbf{x}_{B_k})$ constructs the unit vector on the basis $B_k$ from $\mathbf{x}_{B_k}$, and $\mathbf{1}_H$ is a unit vector on the basis $H$. The sets used to construct the jump set $J$ can be defined as
\begin{equation}
\label{jumpset}
\begin{aligned}
J_{\nearrow} &= \left\{(\mathbf{x}_{B},\mathbf{y}_{D}) \in \mathcal{V}^2 \mid D\neq \varnothing, \exists H \subseteq D \text{ s.t. } H \not\subset B\right\}, \\
J_{\searrow} &= \left\{(\mathbf{x}_{B},\mathbf{y}_{D}) \in \mathcal{V}^2 \mid D\neq \varnothing \text{ s.t. } D \subset B\right\}, \\
J_{+} &= \left\{\mathbf{x}_{B}\in \mathcal{V} \mid \exists J \subseteq I(B)  \text{ s.t. }  x_j \geq \alpha > 0~~ \forall j \in J\right\}, \\
J_{-} &= \left\{\mathbf{x}_{B}\in \mathcal{V} \mid \exists J \subseteq I(B)  \text{ s.t. }  0 < x_j \leq \beta ~~ \forall j \in J\right\}.
\end{aligned}
\end{equation}

\begin{note}
Verifying the constraint $\mathbf{x}_{B_k}(t) \in J_+$ must be done carefully, as it depends on the existence of at least one coordinate of $\mathbf{x}_{B_k}(t)$ that exceeds the threshold $\alpha > 0$. According to \eqref{autojump}, such a crossing triggers the addition of new species and leads to the updated state $\mathbf{x}_{B_{k+1}}(\cdot)$, of higher dimension. As long as the coordinate responsible for the jump remains above $\alpha$, the state stays within $J_+$, potentially generating an infinite number of additions—this is a Zeno effect (\cite{goebel2008zeno}) on the state dimension. To exit $J_+$, this coordinate must either  switch to a different dynamics after the first jump or be considered only once as responsible for the jump throughout the simulation. 
\end{note}
The concept of solution and the existence result considered are based on ongoing work, which consists of a major revision of the article \cite{doliveira:hal-05047454}.

\section{\label{Bact}Application to Bacteriotherapy}
The numerical implementation of this model for the dynamics of the microbiota, in particular for bacteriotherapy following antibiotic treatment as described in Section \ref{Intro}, consists here of specifying all the data involved in the model. In this implementation, the scientific names of the species are not considered; instead, their labels are indexed by the set $\mathcal{I} \subset \mathbb{N}$, referred to as the universe. Our universe $\mathcal{I}$ is partitioned into two subsets, $\mathcal{I} = \mathcal{I}_\text{n} \cup \mathcal{I}_\text{u}$, with $\mathcal{I}_\text{n} \cap \mathcal{I}_\text{u} = \varnothing$, where $\mathcal{I}_\text{n}$ contains the known and catalogued species, while $\mathcal{I}_\text{u}$ contains species that are neither known nor catalogued.

In this implementation, we consider $\mathcal{I}_\text{n} = \{1,2,\dots,11\}$, corresponding to the labels associated with the $11$ categories of species catalogued by  \cite{stein2013ecological}, in the order of their appearance in (\cite{stein2013ecological}, Figure 2). For example, species 1 is Barnesiella and species 9 is Clostridium difficile. The growth rates, antibiotic susceptibilities, and the ecological interaction matrix of these species are those shown in Figure 2 of \cite{stein2013ecological}. Our implementation is limited to $\mathcal{I}_\text{n}$. Suppose the initial state is given by
\begin{equation}
\label{initialcondition}
\mathbf{x}_{B_0}(0) = 0.7 \mathbf{b}_1 + 0.3 \mathbf{b}_2+ 1.2 \mathbf{b}_4 + 0.5 \mathbf{b}_5,
\end{equation}
the antibiotic effect by $u(t) = 1$ for $0 \le t \le 4$ and $u(t) = 0$ otherwise, and the bacteriotherapy by
\begin{equation}
\label{bacteriotherapy}
  \begin{split}
  &\mathbf{v}_{L_k}(t) =\\
    &\begin{cases}
        2.10^{-4} \mathbf{b}_5 + 0.85\mathbf{b}_9, & \text{if } t = 190,\\
      0.2\mathbf{b}_3 + 1.3 \mathbf{b}_5 + 0.96\mathbf{b}_8 , & \text{if } t = 330,\\
      7.10^{-3} \mathbf{b}_1 + 3.10^{-3} \mathbf{b}_2 + 5.10^{-3}\mathbf{b}_4 , & \text{if } t = 560,\\
      \mathbf{0}, & \text{otherwise}.
    \end{cases}
  \end{split}
\end{equation}

\vspace{- 2 mm}

For an autonomous jump, our implementation will be limited to the extinction of species induced by the set $J_-$ described in \eqref{jumpset}. We then set $\beta = 1 \cdot 10^{-6}$. Four different simulations are performed, and the results are shown in figures \ref{fig:microbiotadyn} and \ref{fig:microbiotaDynBacterio}. The input $\mathbf{v}_{L_k}$ at $t = 560$ represents a species removal (see $J_{\searrow}$ in \eqref{jumpset} and Fig.~\ref{fig:microbiotaDynBacterio}) and indicates which species to retain in the system state. Various numerical methods can be used, including the semi-implicit Crank–Nicolson and an explicit scheme\footnote{Readers may contact the authors to obtain the developed C++ code at the following email address: \texttt{arthur.doliveira@lis-lab.fr}.}.

In general, the results presented agree well with those of \cite{jones2020navigation}, according to which the system transitions from one equilibrium state to another as species are added or removed. The difference in order of magnitude with \cite{jones2020navigation} is mainly due to a scaling factor, as described by  \cite{stein2013ecological}. Fig. \ref{fig:sub1}, compared to Fig. \ref{fig:sub2}, is intended primarily to show the effect of the antibiotic on microbiota evolution, namely the slowing down of its ability to reach an equilibrium state. Fig. \ref{fig:sub3}, compared to Fig. \ref{fig:sub4}, illustrates the necessity of distinguishing a near-zero abundance ($x_i \approx 0$) from the effective absence of a species, as reflected in the evolution of the associated basis vectors. As noted in Section \ref{Intro}, zero or near-zero values are common in biology. In Fig. \ref{fig:sub3}, without removing species that fall below a threshold, species 2 — whose abundance had become very low — increases again after all species except 1, 2, and 4 are eliminated at $t = 560$ (see \eqref{bacteriotherapy}). In contrast, in Fig. \ref{fig:sub4}, where the constraint $J_-$ is activated (see \eqref{jumpset}), species 2 does not reappear, as it is considered extinct around day 500 after crossing the threshold $\beta$. In Fig.~\ref{fig:sub4}, we consider a variant of the input at $t = 560$, given by $\mathbf{v}_{L_k}(t) = 7 \cdot 10^{-3} \mathbf{b}_1 + 5 \cdot 10^{-3} \mathbf{b}_4$, as species 2 is no longer present (see $J_{\searrow}$ in \eqref{jumpset}).

\begin{figure}[t]
    \centering
    
    \begin{subfigure}[b]{0.5\textwidth}
        \centering
        \includegraphics[width=0.88\textwidth, height = 4.5 cm]{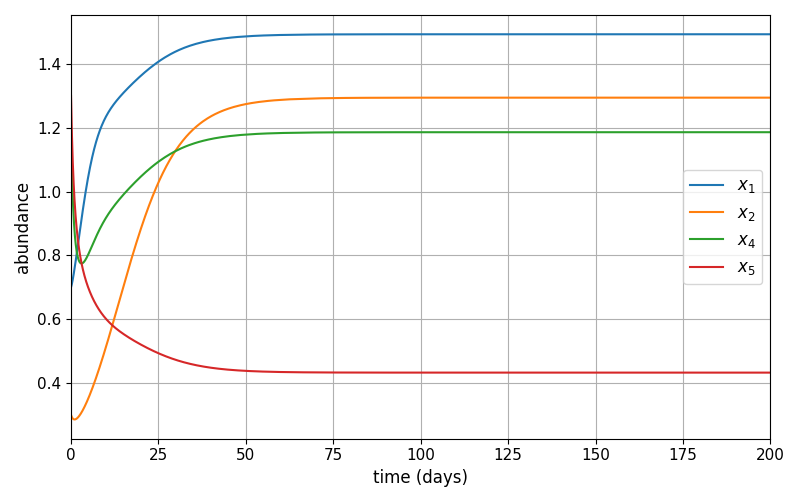}
        \caption{}
        \label{fig:sub1}
    \end{subfigure}
    
    \vspace{-0.1cm} 
    
    \begin{subfigure}[b]{0.5\textwidth}
        \centering
        \includegraphics[width=0.88\textwidth, height = 4.5 cm]{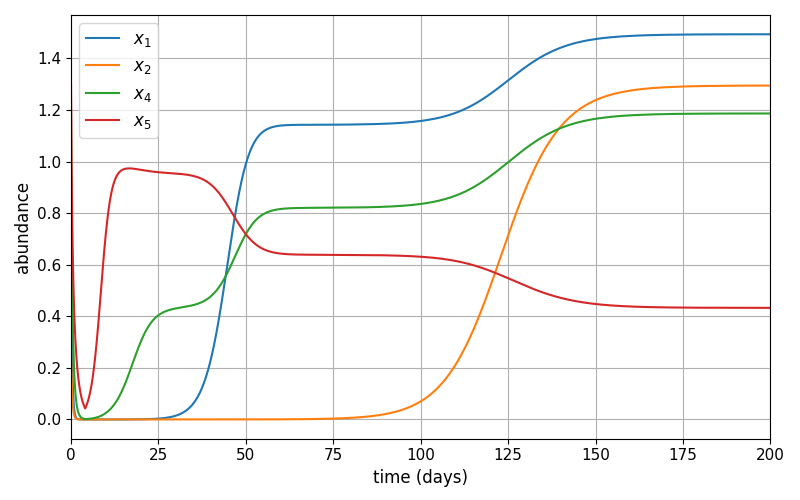}
        \caption{}
        \label{fig:sub2}
    \end{subfigure}
    
    \caption{Evolution of a microbiota simulated with the gLV model \eqref{HybridEq} and initial condition \eqref{initialcondition}: (a) without the antibiotic effect  $u(t)$; (b) with the antibiotic effect.}
    \label{fig:microbiotadyn}
\end{figure}

\begin{figure}[t]
    \centering
    
    \begin{subfigure}[b]{0.5\textwidth}
        \centering
        \includegraphics[width=0.88\textwidth]{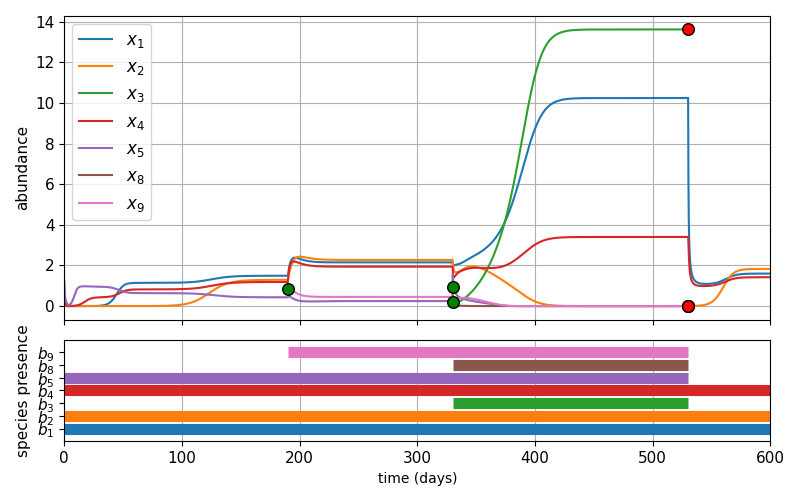}
        \caption{}
        \label{fig:sub3}
    \end{subfigure}
    
    \vspace{-0.1cm} 
    
    \begin{subfigure}[b]{0.5\textwidth}
        \centering
        \includegraphics[width=0.88\textwidth]{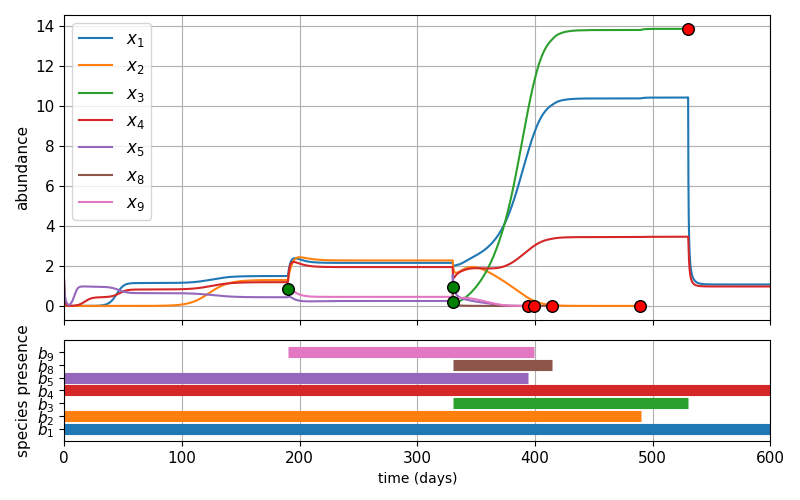}
        \caption{}
        \label{fig:sub4}
    \end{subfigure}
    
    \vspace{-0.3cm} 

    \caption{Evolution of a microbiota simulated with the gLV model \eqref{HybridEq} and initial condition \eqref{initialcondition} over 600 days, under the antibiotic effect $u(t)$ and bacteriotherapy $\mathbf{v}_{L_k}$ : (a) without activation of the jump set $J_-$; (b) with activation. \textcolor{darkgreen}{\textbullet} indicate species appearance, \textcolor{red}{\textbullet} indicate disappearance.}
    \label{fig:microbiotaDynBacterio}
\end{figure}

\section{\label{Conc}Conclusion}
This paper models species turnover in an ecological system governed by the gLV dynamics by introducing a variable-basis state space and its associated algebraic structure. This leads to a reformulation of the gLV model within the framework of hybrid dynamical systems. Application to microbiota dynamics illustrates the turnover mechanism. Future work includes parameter calibration using experimental data and a possible stochastic reformulation.


\bibliography{ifacconf}             
                                                   







\end{document}